\documentclass[a4paper,reqno,12pt]{amsart}
\usepackage{amsmath,amsfonts,amssymb,pdfsync}
\usepackage[latin1]{inputenc}
\usepackage{color}
\usepackage{hyperref}         
 \usepackage{times}

\setlength{\topmargin}{+0.3in} \setlength{\textwidth}{6.5in}
\setlength{\textheight}{8.5in} \setlength{\oddsidemargin}{-0.2cm}
\setlength{\evensidemargin}{-0.2cm}


\newtheorem{theorem}{Theorem}[section]
\newtheorem{definition}[theorem]{Definition}

\newtheorem{lemma}[theorem]{Lemma}
\newtheorem{corollary}[theorem]{Corollary}
\newtheorem{remark}[theorem]{Remark}


\newcommand{\C}{\mathbb{C}}
\def \R {\mathbb{R}}

\numberwithin{equation}{section}


\title[Pullback attractors for a singularly nonautonomous plate equation]{Pullback attractors for a singularly nonautonomous plate equation}

\author[V. L. Carbone]{Vera Lúcia Carbone}

\author[M. J. D. Nascimento]{Marcelo José Dias Nascimento}

\author[K. Schiabel-Silva]{Karina Schiabel-Silva}

\author[R. P. Silva]{Ricardo Parreira da Silva}

\address[V. L. Carbone]{Departamento de Matem\'atica, Universidade Federal de S\~{a}o
Carlos, 13565-905 S\~{a}o
Carlos SP, Brazil}
\email{carbone@dm.ufscar.br}

\address[M. J. D. Nascimento]{Departamento de Matem\'atica, Universidade Federal de S\~{a}o
Carlos, 13565-905 S\~{a}o
Carlos SP, Brazil}
\email{marcelo@dm.ufscar.br}

\address[K. Schiabel-Silva]{Departamento de Matem\'atica, Universidade Federal de S\~{a}o
Carlos, 13565-905 S\~{a}o
Carlos SP, Brazil}
\email{schiabel@dm.ufscar.br}

\address[R. P. Silva]{Departamento de Matem\'atica, IGCE-UNESP, Caixa Postal 178, 13506-700 Rio Claro SP, Brazil}
\email{rpsilva@rc.unesp.br}


\begin{document}

\begin{abstract}
We consider the family of singularly nonautonomous plate equation with structural dam\-ping
\[
u_{tt} + a(t,x)u_{t} + (- \Delta) u_{t} +  (-\Delta)^{2} u + \lambda u = f(u),
\]
in a bounded domain $\Omega \subset \R^n$, with Navier boundary conditions. When the nonlinearity $f$ is dissipative we show that this problem is globally well posed in $H^2_0(\Omega) \times L^2(\Omega)$ and has a family of pullback attractors which is upper-semicontinuous under small perturbations of the damping $a$.

\vskip .1 in \noindent {\it Mathematical Subject Classification
2000:} 35B41, 35L25, 35Q35
\newline {\it Keywords:} pullback attractors, nonautonomous plate equation, upper-semicontinuity
\end{abstract}

\maketitle


\section{Introduction}\label{sec:intr}

We are concerned with the nonautonomous plate equation

\begin{equation}\label{eq:plate}
\begin{cases}
u_{tt} + a_\epsilon(t,x)u_{t} +  (- \Delta) u_{t} +  (-\Delta)^{2} u + \lambda u = f(u)  & \qquad \hbox{ in } \Omega, \\
u= \Delta u = 0 & \qquad \hbox{ on }  \partial \Omega,
\end{cases}
\end{equation}
where $\Omega$ is a bounded smooth domain in $\R^{n}$, $\lambda >0$ and $f:\R \to \R$ is a dissipative nonlinearity with growth conditions which will be specified later. The map $\R \ni t \mapsto a_\epsilon(t,\cdot) \in L^\infty(\Omega)$ is supposed to be H\"older continuous with exponent $0<\beta <1$ and constant $C$, uniformly in $\epsilon \in [0,1]$. Moreover, we suppose that there are positive constants $\alpha_0, \alpha_1 \in \R$ such that 
$\alpha_0 \leqslant a_\epsilon(t,x) \leqslant \alpha_1,$
for $(t,x) \in \R \times \Omega$, $\epsilon \in [0,1]$, and we assume the convergence $a_\epsilon(t,x) \to a_0(t,x)$ as $\epsilon \to 0$, uniformly in $\R \times \Omega$.

The subject of this paper is to analyze the asymptotic behavior of the equation \eqref{eq:plate}, in the energy space $H^2_0(\Omega) \times L^2(\Omega)$, from the pullback attractors theory point of view, \cite{TLR, ChV}, and also to derive some stability properties for the ``pullback structures'' for small values of the parameter~ $\epsilon$.

The investigation of the asymptotic behavior of nonlinear dissipative equations subjected to perturbations on parameters has been extensively studied in the last two decades, with the goal of understanding how the variation of some parameters in the models of the natural sciences can determine the evolution of their state.

In the literature the asymptotic behavior and regularity properties of solutions of  second order differential equations
\begin{equation}\label{eq:abst-sec-order}
u_{tt} + Au_t + Bu = f(t,u),
\end{equation}
where $A$ and $B$ are self-adjoint operators in a Hilbert space $X$ and satisfy some monotonicity properties, has been subject of recent and intense research. Such problems arise on models of vibration of elastic systems and was extensively  studied in \cite{Chen, Trig, DiBlasio, EM, Haraux, Haraux1, Huang, Liu, Xiao, Zhong} and in the references given there. It is important to observe that in such works the linear operators it is not time dependent. However, to study the problem \eqref{eq:plate} we will deal  with equations where the linear operators are time dependent in the form
\begin{equation}\label{eq:abst-sec-order-sing-non-aut}
u_{tt} + A(t)u_t + B(t)u = f(t,u).
\end{equation}
We emphasize this particularity using the term singularly non-autonomous. To deal with such equations we will need a concise existence theory as well continuation results of solutions that will be done in the Section 2. In the Section 3 we obtain some energy estimates necessary to guarantee that the solution operator for \eqref{eq:plate} defines an evolution process which is strongly bounded dissipative. In the Section 4 we present basic definitions and the abstract framework of  the theory of pullback attractors and we prove existence of pullback attractors for the problem \eqref{eq:plate} as well their upper-semicontinuity is $\epsilon=0$.

\section{Setting of the problem}

If $A:=(-\Delta)^{2}$ denote the biharmonic operator with domain $D(A)=\{u \in H^{4}(\Omega) \cap H^{1}_0(\Omega) : \Delta u_{|\partial \Omega}=~0\}$, it is well known that  $A$ is a positive self-adjoint operator in $L^2(\Omega)$ with compact resolvent and therefore $-A$ generates a compact analytic semigroup in $\mathcal{L}(L^2(\Omega))$. Let us to consider, for $\alpha \geqslant 0$, the scale of Hilbert spaces $E^\alpha:= \big(D(A^\alpha), \|A^\alpha \cdot \|_{L^2(\Omega)} + \| \cdot \|_{L^2(\Omega)} \big)$. It is of special interest the case $\alpha=\frac{1}{2}$, where $-A^{\frac{1}{2}}$ is the Laplace operator with homogeneous Dirichlet boundary conditions, ie, $A^{\frac{1}{2}}= -\Delta$ with domain $E^{\frac{1}{2}} = H^2(\Omega) \cap H^1_0(\Omega)$ endowed with the norm $\|u\|_{E^\frac{1}{2}}= \|\Delta u\|_{L^2(\Omega)} + \| u\|_{L^2(\Omega)}$. 

Setting the Hilbert space $X^0 := E^\frac{1}{2} \times E^0$, let $\mathcal{A}: D(\mathcal{A}) \subset X^0 \to X^0$ be the elastic operator
$$
\mathcal{A}:= \left[\begin{array}{cc}
 0 & -I  \\
 A + \lambda I &  A^{\frac{1}{2}}
  \end{array}\right],
$$
with domain $D(\mathcal{A}):= E^1 \times  E^\frac{1}{2}$. It is well known that this operator generates a compact analytic semigroup in $X^0$, see for instance \cite{CC1, Trig, Haraux}. Writing $\mathcal{A}_\epsilon(t):= \mathcal{A} + \mathcal{B}_\epsilon(t)$, where $\mathcal{B}_\epsilon(t)$ is the uniformly bounded operator given by
\[
\mathcal{B}_\epsilon(t) := \left[\begin{array}{cc}
  0 & 0 \\
  0 &  a_\epsilon(t,\cdot)I \end{array}\right],
\]
it follows that $\mathcal{A}_\epsilon(t)$ is also a sectorial operator in $X^0$, with domain $D(\mathcal{A}_\epsilon(t))=D(\mathcal{A})$ (as a vector space) independent of $t$ and $\epsilon$. We observe that from the definition of $\mathcal{A}_\epsilon(t)$, it follows easily from Open Mapping Theorem that $X^1:=(D(\mathcal{A}), \|\mathcal{A} \cdot \|_{X^0} + \| \cdot \|_{X^0})$ is isomorphic to the space $X^1(t):= (D(\mathcal{A}), \|\mathcal{A}_\epsilon(t) \cdot~\|_{X^0} + \| \cdot \|_{X^0})$, uniformly in $t \in \R$ and $\epsilon \in [0,1]$, since we have
$$
\left\| \mathcal{A}_\epsilon(t) \left[\begin{array}{c}
  u\\
  v \end{array}\right]  \right\|_{X^0} +  \left\| \left[\begin{array}{c}
  u\\
  v \end{array}\right] \right\|_{X^0}  \leqslant  \left\| \mathcal{A} \left[\begin{array}{c}
  u\\
  v \end{array}\right] \right\|_{X^0} +\;  (\alpha_1 +1) \left\| \left[\begin{array}{c}
  u \\
  v \end{array}\right] \right\|_{X^0} \simeq  \left\| \left[\begin{array}{c}
  u \\
  v \end{array}\right] \right\|_{X^1}.
$$

Next we introduce another scale of Hilbert spaces in order to rewrite the equation \eqref{eq:plate} as an ordinary differential equation in a suitable space. We consider $X^\alpha := \big( D(\mathcal{A}^\alpha), \|\mathcal{A}^\alpha \cdot \|_{X^0} + \| \cdot \|_{X^0} \big)$, so by complex interpolation we have $X^\alpha=[X^0,X^1]_{\alpha} = E^\frac{\alpha+1}{2} \times E^\frac{\alpha}{2}$, and the $\alpha$-realization $\mathcal{A}_{\epsilon_\alpha}(t)$ of $\mathcal{A}_\epsilon(t)$ in $X^\alpha$ is an isometry of $X^{\alpha +1}$  onto $X^\alpha$. Also, the sectorial operator $\mathcal{A}_{\epsilon_\alpha}(t): X^{\alpha+1} \subset X^\alpha \to X^\alpha$ in $X^\alpha$ generates a compact analytic semigroup $\{e^{-\mathcal{A}_{\epsilon_\alpha}(t) s}: s\geqslant 0\}$ in $\mathcal{L}(X^\alpha)$ which is the restriction (or extension if $\alpha <0$) of $\{e^{-\mathcal{A}_\epsilon(t) s}: s\geqslant 0\}$ to $X^\alpha$. For more details we refer the reader to \cite{Amann, Triebel}. To shorten notation, we supress the index $\alpha$ and we write $\mathcal{A}_\epsilon(t)$ for all different realizations of this operator.

In this framework the problem \eqref{eq:plate} can be rewritten as an ordinary differential equation 
\begin{eqnarray}\label{eq:syst-nonl}
 \dfrac{d}{dt} \left[\begin{array}{c}
  u\\
  v \end{array}\right] + \mathcal{A}_\epsilon(t) \left[\begin{array}{c}
  u\\
  v \end{array}\right]   = F \left( \left[\begin{array}{c}
  u\\
  v \end{array}\right] \right),
\end{eqnarray}
with
$
F \left( \left[\begin{array}{c}
  u\\
  v \end{array}\right] \right) = \left[\begin{array}{c}
  0 \\
  f^e(u) \end{array}\right],
$
where $f^e$ is the Nemitski$\breve{\mbox{\i}}$ operator associated to $f$.

In order to obtain solutions of \eqref{eq:syst-nonl} we will need some information about the solution operator asso\-cia\-ted to the linear homogeneous problem
\begin{eqnarray}\label{eq:homog-gen}
 \dfrac{d}{dt} \left[\begin{array}{c}
  u\\
  v \end{array}\right] + \mathcal{A}_\epsilon(t) \left[\begin{array}{c}
  u\\
  v \end{array}\right]   = \left[\begin{array}{c}
  0 \\
  0 \end{array}\right], \qquad  \left[\begin{array}{c}
  u(t)\\
  v(t) \end{array}\right]_{t=t_0}= \left[\begin{array}{c}
  u_0\\
  v_0 \end{array}\right] \in X^{\alpha},
\end{eqnarray}
and to do this we introduce the following definitions:

\begin{definition}
Let $\mathcal{X}$ be a Banach space and assume that for all $t \in \R$ the linear operators $A(t): D \subset \mathcal{X} \to \mathcal{X}$ are closed and densely defined $($with $D$ independent of $t)$.

\begin{enumerate}
\item[a)] We say that $A(t)$ is uniformly sectorial $($in $\mathcal{X})$ if there is a constant $M>0$ $($in\-de\-pen\-dent of $t)$ such that
\begin{equation}
\left\| ({A}(t) + \mu I)^{-1} \right\|_{\mathcal{L}(\mathcal{X})} \leqslant \frac{M}{ |\mu| + 1}, \quad \forall \; \mu \in \C, \; {\rm Re}\, (\mu) \geqslant 0.
\end{equation}

\item[b)] We say that the map $t \mapsto{A}(t)$ is uniformly Hölder continuous $($in $\mathcal{X})$, if there are cons\-tants $C>0$ and $0<\beta<1$, such that for any $t,\tau,s \in \R$,

\begin{equation}
\left\| [{A}(t)-{A}(\tau)]{A}(s)^{-1} \right\|_{\mathcal{L}(\mathcal{X})} \leqslant C (t-\tau)^\beta.
\end{equation}

\item[c)] We say that a family of linear operators $\{S(t,\tau): t\geqslant \tau \in \R \} \subset \mathcal{L}(\mathcal{X})$ is a linear evolution process if
\begin{enumerate}
\item[1)] $S(\tau,\tau)=I$,

\item[2)]  $S(t, \sigma)S(\sigma,\tau)=S(t,\tau)$, for any $t \geqslant \sigma \geqslant \tau$,

\item[3)]  $(t,\tau) \mapsto S(t,\tau)v$ is continuous for all $t \geqslant \tau$ and $v \in \mathcal{X}$.
\end{enumerate}
\end{enumerate}

\end{definition}

\medskip

Notice that the requirements on $a_\epsilon$, $\epsilon \in [0,1]$ and the characterization of the resolvent operator
\[
\mathcal{A}_\epsilon(t)^{-1}=\left[\begin{array}{cc}
  (A+\lambda)^{-1}(A^\frac{1}{2} + a_\epsilon(t,\cdot)I)   & (A+\lambda)^{-1}\\
  -I  & 0
  \end{array}\right]
\] 
guarantee that the operators $\mathcal{A}_\epsilon$ are uniformly sectorial, and the map $t \mapsto \mathcal{A}_\epsilon(t)$ is uniformly H\"older continuous in $X^{0}$, uniformly in $\epsilon$. Therefore, following \cite{CN}, it is possible to construct a family $\{L_\epsilon(t,\tau): t\geqslant \tau \in \R \} \subset \mathcal{L}(X^{0})$ of linear evolution process that solves \eqref{eq:homog-gen}, for each $\epsilon \in [0,1]$.
  
  \medskip


\begin{definition} Let $F: X^\alpha \to X^{\beta}$, $\alpha \in [\beta, \beta+1)$, be a continuous function. We say that a continuous function $x:[t_0,t_0+{\tau}] \to X^\alpha $ is a $($local$)$ solution of  \eqref{eq:syst-nonl} starting in $ x_0 \in X^\alpha$, if $x \in C([t_0,t_0+{\tau}], X^\alpha) \cap C^1((t_0,t_0+{\tau}], X^{\alpha})$, $x(t_0)=x_0$, $x(t) \in D(\mathcal{A}_\epsilon(t))$ for all $t \in (t_0,t_0+{\tau}]$ and \eqref{eq:syst-nonl} is satisfied for all $t \in (t_0,t_0+{\tau})$.
\end{definition}

We can now state the following result, proved in \cite[Theorem 3.1]{CN}

\begin{theorem}\label{teo:existunicsol}
Suppose that the family of operators $\mathcal{A}(t)$ is uniformly sectorial and uniformly Hölder continuous in $X^{\beta}$. If $F:X^\alpha \to X^{\beta}$, $\alpha \in [\beta, \beta +1)$, is a Lipschitz continuous map in bounded subsets of $X^\alpha$, then, given $r > 0$, there is a time $\tau > 0$ such that for all $x_0 \in B_{X^\alpha }(0,r)$ there exists a unique solution of the problem \eqref{eq:syst-nonl} starting in $x_0$ and defined in $[t_0, t_0+\tau]$. Moreover, such solutions are continuous with respect the initial data in $B_{X^\alpha }(0,r)$.
\end{theorem}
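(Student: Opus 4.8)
The plan is to recast \eqref{eq:syst-nonl} as an integral equation through the variation of constants formula and to solve it by a contraction argument in a space of continuous $X^\alpha$-valued functions. Let $S(t,\tau)$ denote the linear evolution process associated with $\mathcal{A}(t)$ that solves the homogeneous problem \eqref{eq:homog-gen}, whose existence is guaranteed by the uniform sectoriality and uniform H\"older continuity recalled above. Then a continuous function $x$ is a solution of \eqref{eq:syst-nonl} starting at $x_0$ precisely when it satisfies
\begin{equation}\label{eq:duhamel-plan}
x(t) = S(t,t_0)x_0 + \int_{t_0}^{t} S(t,s)F(x(s))\,ds, \qquad t \in [t_0, t_0+\tau].
\end{equation}

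First I would record the smoothing estimates for the evolution family. The construction of $S(t,s)$ in the uniformly sectorial, uniformly H\"older setting yields a constant $M\geqslant 1$, independent of $t$ and $s$, such that $\|S(t,s)\|_{\mathcal{L}(X^\alpha)} \leqslant M$ and
\begin{equation}\label{eq:smoothing-plan}
\|S(t,s)\|_{\mathcal{L}(X^\beta, X^\alpha)} \leqslant M\,(t-s)^{-(\alpha-\beta)}, \qquad t > s.
\end{equation}
Thus the range $X^\beta$ of $F$ is pulled up to $X^\alpha$ at the cost of the singular factor $(t-s)^{-(\alpha-\beta)}$; since $\alpha-\beta<1$ this factor is integrable, with $\int_{t_0}^{t}(t-s)^{-(\alpha-\beta)}\,ds = (1-(\alpha-\beta))^{-1}(t-t_0)^{1-(\alpha-\beta)} \to 0$ as $t\to t_0^{+}$. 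This integrability is what makes the scheme work.

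Next I would set up the fixed point. Fixing $r>0$, put $R=2Mr+1$ and let $L$ be the Lipschitz constant of $F$ on $B_{X^\alpha}(0,R)$. In the closed set
\begin{equation*}
K = \big\{ x \in C([t_0,t_0+\tau], X^\alpha) : x(t_0)=x_0,\ \|x(t)\|_{X^\alpha}\leqslant R\ \text{ for all } t \big\},
\end{equation*}
define $\mathcal{T}x$ by the right-hand side of \eqref{eq:duhamel-plan}. Using \eqref{eq:smoothing-plan}, the bound $\|F(x(s))\|_{X^\beta}\leqslant \|F(0)\|_{X^\beta}+LR$ on $K$, and the integrability above, one checks that $\mathcal{T}$ maps $K$ into itself once $\tau$ is small, and that for $x,y\in K$
\begin{equation*}
\sup_{t}\|\mathcal{T}x(t)-\mathcal{T}y(t)\|_{X^\alpha} \leqslant ML\,\frac{\tau^{1-(\alpha-\beta)}}{1-(\alpha-\beta)}\,\sup_{t}\|x(t)-y(t)\|_{X^\alpha},
\end{equation*}
so that shrinking $\tau$ makes $\mathcal{T}$ a contraction. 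The Banach fixed point theorem then supplies a unique fixed point, the desired local solution; the key point is that $\tau$ depends only on $r$ through $R$, $L$ and $M$, hence a single $\tau$ serves for every $x_0\in B_{X^\alpha}(0,r)$.

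Finally, for continuous dependence I would subtract the identities \eqref{eq:duhamel-plan} for two solutions $x,y$ with data $x_0,y_0 \in B_{X^\alpha}(0,r)$ to obtain
\begin{equation*}
\|x(t)-y(t)\|_{X^\alpha} \leqslant M\|x_0-y_0\|_{X^\alpha} + ML\int_{t_0}^{t}(t-s)^{-(\alpha-\beta)}\|x(s)-y(s)\|_{X^\alpha}\,ds.
\end{equation*}
The main obstacle is precisely this last step: the singular kernel forbids a direct application of the classical Gronwall lemma, and one must instead invoke the singular Gronwall (Gronwall--Henry) inequality, which yields $\|x(t)-y(t)\|_{X^\alpha}\leqslant C\|x_0-y_0\|_{X^\alpha}$ on $[t_0,t_0+\tau]$ with $C=C(M,L,\tau,\alpha-\beta)$. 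This simultaneously gives the Lipschitz dependence on the initial data and re-confirms uniqueness. The genuinely delicate ingredients are the uniform smoothing estimate \eqref{eq:smoothing-plan} for the nonautonomous evolution family, where the uniform sectoriality and uniform H\"older hypotheses are consumed, and the singular Gronwall step.
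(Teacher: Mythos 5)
The paper does not actually prove this theorem itself---it defers to \cite[Theorem 3.1]{CN}---and the argument there is essentially the one you outline: the variation-of-constants formula with the smoothing estimate $\|L(t,s)\|_{\mathcal{L}(X^\beta,X^\alpha)}\leqslant M(t-s)^{-(\alpha-\beta)}$ for the nonautonomous evolution family, a contraction on a ball of $C([t_0,t_0+\tau],X^\alpha)$ with $\tau$ depending only on $r$, and the singular (Gronwall--Henry) inequality for continuous dependence. Your plan is correct and matches that route; the one step you pass over silently is the upgrade from the mild solution produced by the fixed point to a solution in the sense of the paper's definition (first show the fixed point is H\"older continuous in time with values in $X^\alpha$, so that $F(x(\cdot))$ is H\"older into $X^\beta$, and then invoke the linear regularity theory to get $x\in C^1$ with $x(t)\in D(\mathcal{A}(t))$), which is also how the cited proof concludes.
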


Next we present the class of nonlinearities that we will consider.

\begin{lemma}\label{lem:fregul}
Let $f \in C^1(\R)$ be a function such that there exist constants $c>0$ and $\rho > 1$ such that  $|f'(s)| \leqslant c(1+|s|^{\rho-1})$, $\forall \, s\in \R$. Then  
$$| f(s)-f(t) |  \leqslant  2^{\rho-1}c \, | t-s | \big(1 +| s |^{\rho-1} + | t |^{\rho-1}  \big),$$ $\forall \, s,t \in \R$ .
\end{lemma}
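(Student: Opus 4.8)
The plan is to reduce the estimate to the growth hypothesis on $f'$ via the fundamental theorem of calculus. First I would fix $s,t \in \R$ and write, using that $f \in C^1(\R)$,
$$f(s) - f(t) = (s - t)\int_0^1 f'\big(t + \theta(s-t)\big)\,d\theta.$$
Equivalently one may invoke the mean value theorem to produce a point $\xi$ between $s$ and $t$ with $f(s)-f(t) = f'(\xi)(s-t)$; the integral form is convenient because it makes the dependence on the whole segment explicit. Taking absolute values and inserting the growth bound $|f'(\sigma)| \leqslant c(1 + |\sigma|^{\rho-1})$ gives
$$|f(s) - f(t)| \leqslant c\,|s-t|\int_0^1 \big(1 + |t + \theta(s-t)|^{\rho-1}\big)\,d\theta.$$

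Next I would control the intermediate argument uniformly in $\theta$. For each $\theta \in [0,1]$ the point $t + \theta(s-t) = (1-\theta)t + \theta s$ is a convex combination of $s$ and $t$, so the triangle inequality yields $|t + \theta(s-t)| \leqslant (1-\theta)|t| + \theta|s| \leqslant |s| + |t|$. Since $\rho - 1 > 0$ and $x \mapsto x^{\rho-1}$ is nondecreasing on $[0,\infty)$, this gives $|t+\theta(s-t)|^{\rho-1} \leqslant (|s|+|t|)^{\rho-1}$, a bound independent of $\theta$, so the integration over $[0,1]$ is harmless.

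The only genuinely nontrivial ingredient is the elementary power inequality that produces the constant $2^{\rho-1}$. I would use $|s| + |t| \leqslant 2\max\{|s|,|t|\}$ together with monotonicity to get
$$(|s|+|t|)^{\rho-1} \leqslant 2^{\rho-1}\,\max\{|s|,|t|\}^{\rho-1} \leqslant 2^{\rho-1}\big(|s|^{\rho-1} + |t|^{\rho-1}\big),$$
where the last step holds because $\max\{|s|,|t|\}^{\rho-1}$ equals one of the two summands. The point worth emphasizing is that this chain is valid for \emph{every} $\rho > 1$, so no case distinction between $\rho \geqslant 2$ and $1 < \rho < 2$ is needed; it is precisely the crude bound $|t+\theta(s-t)| \leqslant |s|+|t|$ (rather than the sharper $\max\{|s|,|t|\}$) that delivers the stated factor.

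Substituting back and using $1 \leqslant 2^{\rho-1}$ to absorb the constant term into the power terms, I would conclude
$$|f(s)-f(t)| \leqslant c\,|s-t|\big(1 + 2^{\rho-1}(|s|^{\rho-1}+|t|^{\rho-1})\big) \leqslant 2^{\rho-1}c\,|s-t|\big(1 + |s|^{\rho-1} + |t|^{\rho-1}\big),$$
which is exactly the asserted inequality; since $s,t$ were arbitrary it holds for all $s,t \in \R$. The whole argument is a short computation, and the sole (very mild) obstacle is bookkeeping the power inequality so that it is uniform in $\rho$ and yields the claimed multiplicative constant.
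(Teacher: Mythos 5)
Your proof is correct and follows essentially the same route as the paper: both reduce to the growth bound on $f'$ at an intermediate point of the segment (you via the integral form of the fundamental theorem of calculus, the paper via the mean value theorem) and then apply the same elementary power inequality $(a+b)^{\rho-1}\leqslant 2^{\rho-1}\max\{a,b\}^{\rho-1}\leqslant 2^{\rho-1}(a^{\rho-1}+b^{\rho-1})$ to obtain the constant $2^{\rho-1}$. The only cosmetic difference is that you bound the convex combination by $|s|+|t|$ before taking powers, whereas the paper keeps $|(1-\theta)s|$ and $|\theta t|$ and discards the factors $(1-\theta),\theta\in(0,1)$ afterwards; both yield the identical estimate.
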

\begin{proof}
For $a,b,s >0$, one has $(a+b)^s \leqslant 2^s \max\{a^s,b^s\} \leqslant 2^s(a^s + b^s)$. Hence,  given $s,t \in \R$, it follows from Mean Value's Theorem the existence of $\theta \in (0,1)$ such that
\[
\begin{split}
|f(s)-f(t)| & = |s-t| |f'\big( (1-\theta) s + \theta t \big)|\leqslant c |s-t| \, (1 + |(1-\theta) s + \theta t |^{\rho-1}) \\
&\leqslant  2^{\rho-1}c  |s-t|  \, (1 +  |(1-\theta) s|^{\rho-1} + |\theta t |^{\rho-1}) \leqslant  2^{\rho-1}c |s-t|  \, (1 +  | s|^{\rho-1} + | t |^{\rho-1}) .
\end{split}
\]
\end{proof}

\begin{lemma}\label{lem:fewelldef}
Assume that $1<\rho < \dfrac{n+4}{n-4}$ and let $f \in C^1(\R)$ be a function such that there exists a constant $c>0$ such that  $|f'(s)| \leqslant c(1+|s|^{\rho-1})$, $\forall \, s\in \R$. Then there exists $\alpha \in (0,1)$ such that the Nemitski$\breve{\mbox{\i}}$ operator $f^e: E^{\frac{1}{2}}\to E^{-\frac{\alpha}{2}}$ is Lipschitz continuous in bounded subsets of $ E^{\frac{1}{2}}$.
\end{lemma}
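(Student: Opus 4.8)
The plan is to estimate the $E^{-\alpha/2}$-norm by duality. Since the whole scale $\{E^{s}\}$ is generated by the positive self-adjoint operator $A=(-\Delta)^2$, one has, on the Sobolev scale, $E^{s}\simeq H^{4s}(\Omega)$ with the Navier boundary conditions, and the negative-order space $E^{-\alpha/2}$ is the dual of $E^{\alpha/2}$ relative to the $L^2(\Omega)=E^0$ pivot pairing (this identification comes from the construction of the fractional power scale, cf. \cite{Amann, Triebel}). In particular $E^{1/2}=H^2(\Omega)\cap H^1_0(\Omega)\hookrightarrow L^{p^*}(\Omega)$ with $p^*=\frac{2n}{n-4}$ — note that the hypothesis on $\rho$ forces $n>4$ — while $E^{\alpha/2}\simeq H^{2\alpha}(\Omega)\hookrightarrow L^{q}(\Omega)$ whenever $\frac1q\geqslant\frac12-\frac{2\alpha}{n}$. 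Thus I would write
\[
\|f^e(u)-f^e(v)\|_{E^{-\alpha/2}}=\sup_{\|w\|_{E^{\alpha/2}}\leqslant 1}\Big|\int_\Omega\big(f(u)-f(v)\big)\,w\,dx\Big|,
\]
and reduce everything to an $L^p$ estimate of the integrand.

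For the integrand I would insert the pointwise bound of Lemma~\ref{lem:fregul}, namely $|f(u)-f(v)|\leqslant 2^{\rho-1}c\,|u-v|\big(1+|u|^{\rho-1}+|v|^{\rho-1}\big)$, and apply H\"older's inequality to the resulting terms, viewing $|u-v|\in L^{p^*}$, the factors $|u|^{\rho-1},|v|^{\rho-1}\in L^{p^*/(\rho-1)}$, and $|w|\in L^{q}$. The exponents match when $\frac{\rho}{p^*}+\frac1q=1$, i.e. $\frac1q=1-\frac{\rho(n-4)}{2n}$; the lower-order term coming from the constant $1$ requires a smaller power of $w$ and is absorbed on the bounded domain $\Omega$. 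This produces
\[
\Big|\int_\Omega\big(f(u)-f(v)\big)\,w\,dx\Big|\leqslant C\,\|u-v\|_{L^{p^*}}\big(1+\|u\|_{L^{p^*}}^{\rho-1}+\|v\|_{L^{p^*}}^{\rho-1}\big)\,\|w\|_{L^{q}}.
\]

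The crux is then to choose $\alpha$ so that $E^{\alpha/2}\hookrightarrow L^{q}$ holds for this particular $q$. Substituting $\frac1q=1-\frac{\rho(n-4)}{2n}$ into the embedding condition $\frac1q\geqslant\frac12-\frac{2\alpha}{n}$ turns it into the requirement $\alpha\geqslant\frac{\rho(n-4)-n}{4}$. This threshold is strictly below $1$ precisely when $\rho(n-4)<n+4$, that is when $\rho<\frac{n+4}{n-4}$, which is exactly the hypothesis; hence one may fix some $\alpha\in(0,1)$ above the threshold and the corresponding embedding is available. Combining this with $E^{1/2}\hookrightarrow L^{p^*}$ to control $\|u-v\|_{L^{p^*}}$, $\|u\|_{L^{p^*}}$, $\|v\|_{L^{p^*}}$ by the $E^{1/2}$-norms, and taking the supremum over $\|w\|_{E^{\alpha/2}}\leqslant 1$, yields
\[
\|f^e(u)-f^e(v)\|_{E^{-\alpha/2}}\leqslant C(R)\,\|u-v\|_{E^{1/2}}\qquad\text{for }\ \|u\|_{E^{1/2}},\,\|v\|_{E^{1/2}}\leqslant R,
\]
which is the asserted Lipschitz continuity on bounded subsets; taking $v=0$ also shows $f^e(u)\in E^{-\alpha/2}$.

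The step I expect to be the genuine obstacle — as opposed to the routine H\"older/Sobolev bookkeeping — is exactly the exponent matching above: one must check that the growth exponent $\rho$ together with the Sobolev exponent $p^*$ of the energy space $E^{1/2}$ leaves enough room in the dual scale $E^{-\alpha/2}$ to land at a value $\alpha$ strictly below $1$, and that this constraint is governed precisely by the subcriticality assumption $\rho<\frac{n+4}{n-4}$. The supporting identifications $E^{-\alpha/2}=(E^{\alpha/2})'$ and $E^{s}\simeq H^{4s}$ also have to be invoked from the construction of the fractional power scale, but these are standard once $A=(-\Delta)^2$ is fixed.
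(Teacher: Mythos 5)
Your proposal is correct and follows essentially the same route as the paper: the pointwise bound of Lemma~\ref{lem:fregul}, a H\"older/Sobolev bookkeeping step, and the observation that the required embedding into the negative space is available for some $\alpha<1$ exactly when $\rho<\frac{n+4}{n-4}$ (your threshold $\alpha\geqslant\frac{\rho(n-4)-n}{4}$ is the same condition as the paper's $\rho\leqslant\frac{n+4\alpha}{n-4}$). The only cosmetic difference is that you phrase the target norm by duality against $w\in E^{\alpha/2}$ and put all factors at the exponent $p^*=\frac{2n}{n-4}$, whereas the paper uses the equivalent predual embedding $L^{\frac{2n}{n+4\alpha}}(\Omega)\hookrightarrow E^{-\frac{\alpha}{2}}$ and splits the H\"older exponents as $\frac{n+4\alpha}{2n}=\frac{n-4\alpha}{2n}+\frac{4\alpha}{n}$.
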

\begin{proof}
Let  be $\alpha \in (0,1)$ such that 
\begin{equation}\label{eq:alpha}
\rho \leqslant  \dfrac{n+4 \alpha}{n-4}.
\end{equation}
 Since $E^{\gamma} \hookrightarrow H^{{4 \gamma}}(\Omega)$, we have $E^{\frac{1}{2}} \hookrightarrow E^{\frac{\alpha}{2}} \hookrightarrow H^{{2 \alpha}}(\Omega) \hookrightarrow L^{\frac{2n}{n-4\alpha}}$. Therefore $L^{\frac{2n}{n+4\alpha}}(\Omega) \hookrightarrow  E^{-\frac{\alpha}{2}}$. Now by Lemma \ref{lem:fregul} and Hölder's Inequality we obtain
\[
\begin{split}
\|f^e(u) - f^e(v)\|_{E^{-\frac{\alpha}{2}}} & \leqslant \tilde{c}\,  \|f^e(u) - f^e(v)\|_{L^{\frac{2n}{n+4\alpha}}(\Omega)} \\
& \displaystyle \leqslant \tilde{c}\, \left( \int_{\Omega} \left[ 2^{\rho-1}c \, |u-v|(1 + |u|^{\rho-1} +  |v|^{\rho-1}) \right]^{\frac{2n}{n+4 \alpha}} \right)^{\frac{n+4 \alpha}{2n}} \\
& \displaystyle \leqslant \tilde{\tilde{c}}\,  \|u-v\|_{L^{\frac{2n}{n-4 \alpha}} (\Omega)}  \left( \int_{\Omega} \left( 1 +  |u|^{\rho-1} +  |v|^{\rho-1} \right)^{\frac{n}{4 \alpha}} \right)^{\frac{4 \alpha}{n}} \\
& \displaystyle \leqslant \tilde{\tilde{\tilde{c}}}\, \|u-v\|_{L^{\frac{2n}{n-4 \alpha}} (\Omega)} \left(1 + \|u\|_{L^{\frac{n(\rho-1)}{4 \alpha}}(\Omega)}^{\rho-1}  + \|v\|_{L^{\frac{n(\rho-1)}{4 \alpha}}(\Omega)}^{\rho-1} \right),
\end{split}
\]
where ${\tilde{c}}\,$ is the embedding constant from $L^{\frac{2n}{n+4 \alpha}}(\Omega)$ to $E^{-\frac{\alpha}{2}}$.

From Sobolev embeddings $E^{\frac{1}{2}} \hookrightarrow E^{\frac{\alpha}{2}} \hookrightarrow H^{2 \alpha}(\Omega) \hookrightarrow L^{\frac{n(\rho-1)}{4 \alpha}}(\Omega)$  for all $1< \rho \leqslant \dfrac{n+4 \alpha}{n-4}$, it follows that

\begin{eqnarray*}
\|f^e(u) - f^e(v)\|_{E^{-\frac{\alpha}{2}}} \leqslant C_1 \, \|u-v\|_{E^{\frac{1}{2}}} \left(1 +  \|u\|_{E^{\frac{1}{2}}}^{\rho-1} + \|v\|_{E^{\frac{1}{2}}}^{\rho-1} \right),
\end{eqnarray*}
for some constant $C_1>0$.
\end{proof}

\begin{remark}\label{Lips-function}
Since $L^{\frac{2n}{n+4}}(\Omega) \hookrightarrow L^2(\Omega)$, it follows from the proof of the Lemma \ref{lem:fewelldef} that $f^e: E^{\frac{1}{2}} \to L^2(\Omega)$ is Lipschitz continuous in bounded subsets, that is,
\[
\|f^e(u) - f^e(v)\|_{L^2(\Omega)}  \leqslant  \tilde{c}\,  \|f^e(u) - f^e(v)\|_{L^{\frac{2n}{n+4}}(\Omega)} \leqslant \tilde{\tilde{c}} \|u-v\|_{E^{\frac{1}{2}}}.
\]
\end{remark}

\begin{corollary}\label{corol:lips}
If $f$ is like in the Lemma $\ref{lem:fewelldef}$  and $\alpha \in (0,1)$ satisfies \eqref{eq:alpha}, the function $F: X^0 \to X^{-\alpha}$, given by $F \left( \left[\begin{array}{c}
  u\\
  v \end{array}\right] \right) = \left[\begin{array}{c}
  0 \\
  f^e(u) \end{array}\right]$, is Lipschitz continuous in bounded subsets of $ X^{0}$.
\end{corollary}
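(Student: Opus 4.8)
The plan is to reduce the claim directly to Lemma~\ref{lem:fewelldef}, exploiting that $F$ acts only on the second coordinate. Using the interpolation identification $X^{-\alpha} = E^{\frac{1-\alpha}{2}} \times E^{-\frac{\alpha}{2}}$, for any two points $(u_1,v_1),(u_2,v_2) \in X^0$ the difference $F(u_1,v_1) - F(u_2,v_2)$ has vanishing first coordinate and second coordinate $f^e(u_1) - f^e(u_2)$, so up to the equivalence of the product norm one has $\|F(u_1,v_1) - F(u_2,v_2)\|_{X^{-\alpha}} = \|f^e(u_1) - f^e(u_2)\|_{E^{-\frac{\alpha}{2}}}$. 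Since $f$ is as in Lemma~\ref{lem:fewelldef} and $\alpha$ satisfies \eqref{eq:alpha}, that lemma applies and bounds the right-hand side by $C_1 \|u_1 - u_2\|_{E^{\frac{1}{2}}}\big(1 + \|u_1\|_{E^{\frac{1}{2}}}^{\rho-1} + \|u_2\|_{E^{\frac{1}{2}}}^{\rho-1}\big)$.

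Next I would transfer this estimate from $E^{\frac{1}{2}}$ to $X^0$ using the first factor of the product $X^0 = E^{\frac{1}{2}} \times E^0$. The canonical projection onto the first coordinate is a contraction, so $\|u_1 - u_2\|_{E^{\frac{1}{2}}} \leqslant \|(u_1,v_1) - (u_2,v_2)\|_{X^0}$ and $\|u_i\|_{E^{\frac{1}{2}}} \leqslant \|(u_i,v_i)\|_{X^0}$ for $i=1,2$. Restricting to a bounded set $B_{X^0}(0,R)$, the factor $1 + \|u_1\|_{E^{\frac{1}{2}}}^{\rho-1} + \|u_2\|_{E^{\frac{1}{2}}}^{\rho-1}$ is then dominated by the constant $1 + 2R^{\rho-1}$, and I obtain $\|F(u_1,v_1) - F(u_2,v_2)\|_{X^{-\alpha}} \leqslant C_1(1 + 2R^{\rho-1}) \|(u_1,v_1) - (u_2,v_2)\|_{X^0}$, which is exactly Lipschitz continuity on bounded subsets of $X^0$.

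There is no genuine obstacle here: the result is essentially bookkeeping that repackages Lemma~\ref{lem:fewelldef} into the phase-space language. The only points requiring a little care are the identification of $X^{-\alpha}$ with $E^{\frac{1-\alpha}{2}} \times E^{-\frac{\alpha}{2}}$, so that the second-coordinate target $E^{-\frac{\alpha}{2}}$ of $f^e$ matches the relevant factor, and the observation that the Lipschitz constant $C_1(1 + 2R^{\rho-1})$ is uniform over the bounded set, which is automatic since it depends only on the embedding constants of Lemma~\ref{lem:fewelldef} and on the radius $R$.
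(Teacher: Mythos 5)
Your proposal is correct and is exactly the argument the paper intends: the corollary is stated without an explicit proof precisely because it follows from Lemma \ref{lem:fewelldef} together with the identification $X^{-\alpha}=E^{\frac{1-\alpha}{2}}\times E^{-\frac{\alpha}{2}}$ and the boundedness of the projection of $X^0=E^{\frac{1}{2}}\times E^0$ onto its first factor, which is the bookkeeping you carry out. Nothing is missing.
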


Now, Theorem \ref{teo:existunicsol} guarantees the local well posedness for the problem \ref{lem:fewelldef} in the energy space $H^2(\Omega) \times L^2(\Omega)$.

\begin{corollary}\label{corol:exst-loc}
If $f, F$  are like in the Corollary $\ref{corol:lips}$ and $\alpha \in (0,1)$ satisfies \eqref{eq:alpha}, then given $r >~0$, for each $\epsilon \in [0,1]$  there is a time $\tau=\tau(r) > 0$, such that for all $x_0 \in B_{X^{0}}(0,r)$ there exists a unique solution $x:~[t_0,t_0+{\tau}] \to X^{0} $ of the problem \eqref{eq:syst-nonl} starting in $x_0$. Moreover, such solutions are continuous with respect the initial data in $B_{X^0}(0,r)$.
\end{corollary}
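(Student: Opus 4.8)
The plan is to obtain this as a direct application of the abstract existence result, Theorem~\ref{teo:existunicsol}, specialized to the concrete data of problem~\eqref{eq:syst-nonl}. Concretely, I would invoke that theorem with $X^{-\alpha}$ playing the role of its target space and $X^{0}$ playing the role of its domain space, where $\alpha \in (0,1)$ is the number fixed in Corollary~\ref{corol:lips} satisfying~\eqref{eq:alpha}. The compatibility condition required there, that the domain exponent belong to the half-open interval of length one starting at the target exponent, amounts here to requiring $0 \in [-\alpha, 1-\alpha)$, which holds precisely because $0 < \alpha < 1$. Thus the formal structure of~\eqref{eq:syst-nonl} as an abstract ODE $\frac{d}{dt}x + \mathcal{A}_\epsilon(t)x = F(x)$ with $F \colon X^{0} \to X^{-\alpha}$ fits the template of Theorem~\ref{teo:existunicsol}.

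It then remains to verify the two standing hypotheses of that theorem. The nonlinearity hypothesis is exactly the content of Corollary~\ref{corol:lips}: the map $F \colon X^{0} \to X^{-\alpha}$ is Lipschitz continuous on bounded subsets of $X^{0}$. For the linear part I would check that the family $\mathcal{A}_\epsilon(t)$ is uniformly sectorial and uniformly H\"older continuous \emph{in the extrapolation space $X^{-\alpha}$}. The sectoriality follows from the discussion of the $\alpha$-realizations in Section~2, where $\mathcal{A}_\epsilon(t)$ is seen to generate a compact analytic semigroup on every space of the scale $X^{\gamma}$; the uniform H\"older continuity in time is inherited from the $X^{0}$ statement because the only $t$-dependence sits in the bounded multiplication block $\mathcal{B}_\epsilon(t)$, whose time increments are controlled by $\|a_\epsilon(t,\cdot)-a_\epsilon(\tau,\cdot)\|_{L^\infty(\Omega)} \leqslant C\,|t-\tau|^{\beta}$, an estimate insensitive to the scale level.

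Applying Theorem~\ref{teo:existunicsol} with these verifications then yields, for each $r>0$, a time $\tau>0$ and, for every $x_0 \in B_{X^{0}}(0,r)$, a unique local solution $x \colon [t_0,t_0+\tau] \to X^{0}$ depending continuously on $x_0$, which is the assertion. The final point is to argue that $\tau$ may be taken independent of $\epsilon$: this is where the uniformity built into the hypotheses is used, since the sectoriality constant $M$, the H\"older constant $C$, and the bounds $\alpha_0 \leqslant a_\epsilon \leqslant \alpha_1$ are all uniform in $\epsilon \in [0,1]$, so the constants governing the variation-of-constants/fixed-point argument underlying Theorem~\ref{teo:existunicsol} are likewise $\epsilon$-uniform, forcing a common lifespan $\tau=\tau(r)$.

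The step I expect to require the most care is the transfer of uniform sectoriality and, especially, uniform H\"older continuity from $X^{0}$ to the negative scale space $X^{-\alpha}$, since Theorem~\ref{teo:existunicsol} demands these properties precisely in the target space $X^{-\alpha}$ whereas the text records them explicitly only in $X^{0}$. Making this rigorous rests on the consistency of the realizations of $\mathcal{A}_\epsilon(t)$ across the interpolation--extrapolation scale (the references \cite{Amann, Triebel} invoked in Section~2), together with the observation that the perturbation $\mathcal{B}_\epsilon(t)$ acts boundedly and with $\epsilon$-uniform, $t$-H\"older norm on each $X^{\gamma}$.
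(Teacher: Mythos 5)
Your proposal is correct and follows exactly the route the paper takes (the paper simply invokes Theorem~\ref{teo:existunicsol} together with Corollary~\ref{corol:lips}, leaving the details implicit). The points you flag as delicate---checking $0\in[-\alpha,1-\alpha)$, transferring uniform sectoriality and uniform H\"older continuity of $\mathcal{A}_\epsilon(t)$ to the extrapolation space $X^{-\alpha}$, and the $\epsilon$-uniformity of the lifespan---are precisely the details the paper omits, and you handle them correctly.
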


Since $\tau$ can be chosen uniformly in bounded subsets of $X^0$, the solutions which do not blow up in $X^0$ must exist globally.


\section{Existence of global solution}

In this section we establish estimates in $X^0$ which implies global existence of solutions of the problem \eqref{eq:syst-nonl}. The choice of $X^{0}$ is suitable to study the asymptotic behaviour of \eqref{eq:plate}, since we may exhibit an energy functional in this space.

We consider the norms
$$
\|u\|_{\frac{1}{2}}:= \left[ \|\Delta u\|_{L^2(\Omega)}^2 + \lambda \| u \|_{L^2(\Omega)}^2 \right ]^{\frac{1}{2}}
$$
and
$$
\left\|\left[\begin{array}{c}
  u\\
  v \end{array}\right] \right\|_{X^0}=\left[\|u\|_{\frac{1}{2}}^2 +  \|v\|_{L^2(\Omega)}^2 \right]^{\frac{1}{2}},
$$
which are equivalent to the usual ones in $E^\frac{1}{2}= H^2(\Omega)\cap H^1_0(\Omega)$ and $X^0 = H^2(\Omega)\cap H^1_0(\Omega) \times L^2(\Omega)$, respectively. 

For any $0<b \leqslant \dfrac{1}{4}$, usind Young's and Cauchy-Schwarz Inequality, we obtain

\begin{align}\label{eq:pro-int-equi}
- \dfrac{1}{4} \left[  \| u \|_{\frac{1}{2}}^2 +  \| v \|_{L^2(\Omega)}^2 \right]   & \leqslant - b \left[ \lambda \| u \|_{L^2(\Omega)}^2 +  \| v \|_{L^2(\Omega)}^2 \right] \leqslant 2b \lambda^{\frac{1}{2}}  \left< u, v \right>_{L^2(\Omega)}  \\
& \leqslant b \left[  \lambda \| u \|_{L^2(\Omega)}^2 +  \| v \|_{L^2(\Omega)}^2 \right]  \leqslant  \dfrac{1}{4} \left[  \| u \|_{\frac{1}{2}}^2 +  \| v \|_{L^2(\Omega)}^2 \right] , \nonumber
\end{align}
which leads to 
\begin{equation}\label{eq:eqen}
\dfrac{1}{4}  \left\|\left[\begin{array}{c}
  u\\
  v \end{array}\right] \right\|_{X^{0}}^2  \leqslant \dfrac{1}{2}  \left\|\left[\begin{array}{c}
  u\\
  v \end{array}\right] \right\|_{X^{0}}^2 + 2b \lambda^{\frac{1}{2}} \left< u, v \right>_{L^2(\Omega)} \leqslant \dfrac{3}{4}  \left\|\left[\begin{array}{c}
  u\\
  v \end{array}\right] \right\|_{X^{0}}^2 .
\end{equation}

First of all, we deal with the homogeneous problem \eqref{eq:homog-gen}. In this case we define the functional $W:X^{0} \to \R$ by
\begin{equation} \label{eq:enlin}
W\left( \left[\begin{array}{c}
  u\\
  v \end{array}\right] \right)  =   \frac{1}{2}  \left\|\left[\begin{array}{c}
  u\\
  v \end{array}\right] \right\|_{X^{0}}^2 + 2b \lambda^{\frac{1}{2}} \left< u,v \right>_{L^2(\Omega)} .
\end{equation}

\bigskip

If $x=\left[\begin{array}{c}
  u\\
  v \end{array}\right] : [t_0,t_0 +\tau] \to X^0$ is the solution of the problem \eqref{eq:homog-gen} starting in $x_0 = \left[\begin{array}{c}
  u_0\\
  v_0 \end{array}\right] \in X^0$, then $u=u(t)$ is a solution (local in time) of the homogeneous problem
\begin{equation}\label{eq:new}
\begin{cases}
u_{tt} + a_\epsilon(t,x)u_{t} +(- \Delta u_{t}) +  (-\Delta)^{2} u + \lambda u = 0 & \qquad \hbox{ in } \Omega, \\
u=\Delta u = 0 & \qquad \hbox{ on }  \partial \Omega.
\end{cases}
\end{equation}

Putting $v=u_{t}$ in \eqref{eq:enlin}, we have by regularity of $u$ given in the Corollary \ref{corol:exst-loc} and by Young's Inequality that
\[\begin{split}
\frac{d}{dt} & W \left(\left[\begin{array}{c}
  u\\
  u_t \end{array}\right] \right)   \\
& =    \left< \Delta u, \Delta u_{t} \right>_{L^2(\Omega)} + \lambda  \left< u,u_{t} \right>_{L^2(\Omega)} +    \left< u_{t},u_{tt}  \right>_{L^2(\Omega)}  + 2b\lambda^{\frac{1}{2}} \left< u_{t},u_{t} \right>_{L^2(\Omega)}+ 2b\lambda^{\frac{1}{2}} \left< u,u_{tt} \right>_{L^2(\Omega)} \\
& =  \left< \Delta u, \Delta u_{t} \right>_{L^2(\Omega)} + \lambda  \left< u,u_{t} \right>_{L^2(\Omega)} +  \left< u_{t}, -a_\epsilon(t,x)u_{t} - (-\Delta)^2 u-(-\Delta)u_t - \lambda u \right>_{L^2(\Omega)} \\
& \quad + 2b\lambda^{\frac{1}{2}} \left< u_{t},u_{t} \right>_{L^2(\Omega)} + 2b\lambda^{\frac{1}{2}} \left< u, -a_\epsilon(t,x)u_{t} - (-\Delta)^2 u - (-\Delta)u_t - \lambda u \right>_{L^2(\Omega)} \\
& \leqslant - (\alpha_{0} - 2b\lambda^{\frac{1}{2}})\|u_{t}\|^2_{L^2(\Omega)} + 2b \alpha_1 \lambda^{\frac{1}{2}} \,\left< -u,u_{t} \right>_{L^2(\Omega)} - 2b\lambda^{\frac{1}{2}} \left< u, (-\Delta)^2 u \right>_{L^2(\Omega)} \\
& \quad - 2b\lambda^{\frac{1}{2}} \left<u,-\Delta u_t \right>_{L^2(\Omega)}  - 2b \lambda^{\frac{3}{2}} \|u\|^2_{L^2(\Omega)} \\
& \leqslant - (\alpha_{0} - 2b\lambda^{\frac{1}{2}} - b \lambda^{\frac{1}{2}})\|u_{t}\|^2_{L^2(\Omega)} + 2b \alpha_{1}\lambda^{\frac{1}{2}} \|u\|_{L^2(\Omega)} \|u_{t}\|_{L^2(\Omega)} -( 2b\lambda^{\frac{1}{2}}-b \lambda^{\frac{1}{2}}) \|\Delta u\|^2_{L^2(\Omega)}\\
& \quad - 2b \lambda^{\frac{3}{2}} \|u\|^2_{L^2(\Omega)} \\
& \leqslant - (\alpha_{0} - 2b\lambda^{\frac{1}{2}} -b \lambda^{\frac{1}{2}})\|u_{t}\|^2_{L^2(\Omega)} + \dfrac{b \alpha_{1} \lambda^{\frac{1}{2}} }{\eta} \|u_{t}\|^2_{L^2(\Omega)} + b \alpha_{1} \lambda^{\frac{1}{2}} \eta  \|u\|^2_{L^2(\Omega)}  \\
& \quad - (2 b\lambda^{\frac{1}{2}}-b \lambda^{\frac{1}{2}})  \|\Delta u\|^2_{L^2(\Omega)} - 2b \lambda^{\frac{3}{2}}  \|u\|^2_{L^2(\Omega)} \\
& \leqslant -(\alpha_{0} - 2b\lambda^{\frac{1}{2}} - b \lambda^{\frac{1}{2}} - \dfrac{b \alpha_{1} \lambda^{\frac{1}{2}} }{\eta} )\|u_{t}\|^2_{L^2(\Omega)} + \lambda^{\frac{1}{2}} (b \alpha_{1}\eta -b\lambda )\|u\|^2_{L^2(\Omega)} \\
& \quad - b\lambda^{\frac{1}{2}} ( \|\Delta u\|^2_{L^2(\Omega)} + \lambda  \|u\|^2_{L^2(\Omega)}),
\end{split}\]
for all $\eta >0$. Taking $\eta= \dfrac{\lambda}{\alpha_{1}}$ it follows that
$$
\frac{d}{dt}W\left( \left[\begin{array}{c}
  u\\
  u_t \end{array}\right] \right) \leqslant  -(\alpha_{0} - 2b\lambda^{\frac{1}{2}} - b \lambda^{\frac{1}{2}} - \dfrac{b \alpha_{1}^2} {\lambda^{\frac{1}{2}} } ) \|u_{t}\|^2_{L^2(\Omega)} - b \lambda^{\frac{1}{2}} ( \|\Delta u\|^2_{L^2(\Omega)} + \lambda  \|u\|^2_{L^2(\Omega)}).$$

Choosing $0< b\leqslant \dfrac{1}{4}$ such that $\alpha_{0} - 2b\lambda^{\frac{1}{2}} - b \epsilon \lambda^{\frac{1}{2}} - \dfrac{b \alpha_{1}^2} {\lambda^{\frac{1}{2}} } > 0$, and taking $\delta = \min\{\alpha_{0} - 2b\lambda^{\frac{1}{2}} - b \lambda^{\frac{1}{2}} - \dfrac{b \alpha_{1}^2} {\lambda^{\frac{1}{2}} }, b \lambda^{\frac{1}{2}}  \}>0$ we have that \eqref{eq:eqen} implies that
$$
\frac{d}{dt}W\left( \left[\begin{array}{c}
  u\\
  u_t \end{array}\right] \right)  \leqslant -\delta  \left[ \| u \|_{\frac{1}{2}}^2 +  \| u_{t} \|_{L^2(\Omega)}^2 \right] \leqslant -\frac{4 \delta}{3} \, W\left( \left[\begin{array}{c}
  u\\
  u_t \end{array}\right] \right).
$$

Therefore
$$
\dfrac{1}{4} \left\| x(t) \right\|_{X^{0}}^2 \leqslant W\left( \left[\begin{array}{c}
  u_0\\
  v_0 \end{array}\right] \right) \, e^{-\frac{4 \delta}{3} (t-t_0)} \leqslant 3 \left\|  \left[\begin{array}{c}
  u_0\\
  v_0 \end{array}\right] \right\|_{X^{0}}^2 e^{-\frac{4 \delta}{3} (t-t_0)},
  $$
for all $t\in [t_0, t_0+\tau]$.

Hence we conclude that the solutions of \eqref{eq:new} are uniformly exponentially dominated for initial data $x_0$ in bounded subsets $B \subset X^{0}$.

In order to get energy estimates in the semilinear case \eqref{eq:syst-nonl}, we assume besides of the hypothesis in the Corollary \ref{corol:lips}, the dissipativeness condition
\begin{equation}\label{dissipative-condition}
\limsup_{|s|\to \infty} \frac{f(s)}{s} \leqslant 0.
\end{equation}

In this case we consider the following functional $\mathcal{W}:X^0 \to \R$
\begin{equation}\label{eq:ener-non-lin}
\mathcal{W}\left( \left[\begin{array}{c}
  u\\
  v \end{array}\right] \right)  =   W\left( \left[\begin{array}{c}
  u\\
  v \end{array}\right] \right)  - \int_{\Omega} \left[\begin{array}{c}
  0\\
  \mathcal{F}^e(u) \end{array}\right] \, dx,
\end{equation}
where $\mathcal{F}^e$ is the Nemitski$\breve{\mbox{\i}}$ map associated to a primitive of $f$, $\displaystyle \mathcal{F}(s)=\int_0^s f(t)\, dt$.

Now we suppose that $x=\left[\begin{array}{c}
  u\\
  v \end{array}\right] : [t_0,t_0 +\tau] \to X^0$ is the solution of the problem \eqref{eq:syst-nonl} starting in $x_0=\left[\begin{array}{c}
  u_0\\
  v_0 \end{array}\right] \in X^0$. Therefore $u=u(t)$ is a solution (local in time) of the equation
\begin{equation*}
\begin{cases}
u_{tt} + a_\epsilon(t,x)u_{t} + (- \Delta u_{t} )+  (-\Delta)^{2} u + \lambda u = f(u) & \qquad \hbox{ in } \Omega, \\
u=\Delta u = 0 & \qquad \hbox{ on }  \partial \Omega .
\end{cases}
\end{equation*}
Similarly to the homogeneous case we have
\[
\begin{split}
&\frac{d}{dt} \mathcal{W}\left( \left[\begin{array}{c}
  u\\
  u_t \end{array}\right] \right)   \\
& =    \left< \Delta u, \Delta u_{t} \right>_{L^2(\Omega)} + \lambda  \left< u,u_{t} \right>_{L^2(\Omega)} +    \left< u_{t},u_{tt}  \right>_{L^2(\Omega)}  + 2b\lambda^{\frac{1}{2}} \left< u_{t},u_{t} \right>_{L^2(\Omega)}+ 2b\lambda^{\frac{1}{2}} \left< u,u_{tt} \right>_{L^2(\Omega)}\\
& \quad- \int_{\Omega} f(u) u_t dx \\
& = \left< \Delta u, \Delta u_{t} \right>_{L^2(\Omega)} + \lambda  \left< u,u_{t} \right>_{L^2(\Omega)} +  \left< u_{t}, -a_\epsilon(t,x)u_{t} - (-\Delta)^2 u-(-\Delta)u_t - \lambda u + f(u) \right>_{L^2(\Omega)}  \\
&\quad +  2b\lambda^{\frac{1}{2}} \left< u_{t},u_{t} \right>_{L^2(\Omega)} + 2b\lambda^{\frac{1}{2}} \left< u, -a_\epsilon(t,x)u_{t} - (-\Delta)^2 u-(-\Delta)u_t - \lambda u + f(u) \right>_{L^2(\Omega)} \\
& \qquad - \int_{\Omega} f(u) u_t dx \\
&  \leqslant   -(\alpha_{0} - 2b\lambda^{\frac{1}{2}} - b \lambda^{\frac{1}{2}} - \dfrac{b \alpha_{1} \lambda^{\frac{1}{2}} }{\eta} )\|u_{t}\|^2_{L^2(\Omega)} + \lambda^{\frac{1}{2}} (b \alpha_{1}\eta -b\lambda )\|u\|^2_{L^2(\Omega)} \\
& \quad - b \lambda^{\frac{1}{2}} ( \|\Delta u\|^2_{L^2(\Omega)} + \lambda  \|u\|^2_{L^2(\Omega)}) +  2b \lambda^{\frac{1}{2}} \int_{\Omega}f(u)udx.
\end{split}
\]
for all $\eta >0$.

To deal with the integral term, just notice that from dissipativeness condition \eqref{dissipative-condition}, for all $\nu >0$ given, there exists $R_\nu >0$ such that for $|s| > R_ \nu$ one has $f(s)s \leqslant \nu  s^2$. Moreover being the function $f(s)s$ bounded in the interval $|s| \leqslant R_ \nu $ there exists a constant $M_\nu $ such that $f(s)s \leqslant M_\nu + \nu s^2$ for all $s \in \R$.

Therefore, given $\nu >0$ there exists $C_\nu >0$ such that
\[
\int_{\Omega} f(u) u \,dx \leqslant \nu \|u\|_{L^2(\Omega)}^{2} + C_{\nu}.
\]

Therefore
\[
\begin{split}
\frac{d}{dt}  \mathcal{W}\left( \left[\begin{array}{c}
  u\\
  u_t \end{array}\right] \right)& \leqslant  -(\alpha_{0} - 2b\lambda^{\frac{1}{2}} - b \lambda^{\frac{1}{2}} - \dfrac{b \alpha_{1} \lambda^{\frac{1}{2}} }{\eta} )\|u_{t}\|^2_{L^2(\Omega)} + \lambda^{\frac{1}{2}} (b \alpha_{1}\eta -b\lambda )\|u\|^2_{L^2(\Omega)} \\
& \quad - b \lambda^{\frac{1}{2}} ( \|\Delta u\|^2_{L^2(\Omega)} + \lambda  \|u\|^2_{L^2(\Omega)})+ 2b \lambda^{\frac{1}{2}}(\nu \|u\|_{L^2(\Omega)}^{2} + C_{\nu})  \\
&\leqslant  -(\alpha_{0} - 2b\lambda^{\frac{1}{2}} - b \lambda^{\frac{1}{2}} - \dfrac{b \alpha_{1} \lambda^{\frac{1}{2}} }{\eta} )\|u_{t}\|^2_{L^2(\Omega)} + \lambda^{\frac{1}{2}} (b \alpha_{1}\eta -b\lambda +2b\nu)\|u\|^2_{L^2(\Omega)} \\
& \quad - b \lambda^{\frac{1}{2}} ( \|\Delta u\|^2_{L^2(\Omega)} + \lambda  \|u\|^2_{L^2(\Omega)}) + 2b \lambda^{\frac{1}{2}} C_{\nu}.
\end{split}
\]
Now, fixing $\displaystyle \nu \in (0, \frac{\lambda}{2})$ and taking $\displaystyle \eta = \frac{\lambda - 2\nu}{\alpha_1}>0$, we have
\[
\frac{d}{dt}\mathcal{W}\left( \left[\begin{array}{c}
  \!u\! \\
  \!u_t\! \end{array}\right] \right)  \leqslant   -(\alpha_{0} - 2b\lambda^{\frac{1}{2}} - b \lambda^{\frac{1}{2}} - \dfrac{b \alpha_{1} \lambda^{\frac{1}{2}} }{\eta} ) \|u_{t}\|^2_{L^2(\Omega)} - b \lambda^{\frac{1}{2}} ( \|\Delta u\|^2_{L^2(\Omega)} + \lambda  \|u\|^2_{L^2(\Omega)}) + 2b \lambda^{\frac{1}{2}} C_{\nu}.
\]
Choosing $\displaystyle 0< b < \frac{\alpha_0}{\lambda^{\frac{1}{2}}(2\eta +\eta + \alpha_1)}$ and $\displaystyle \omega = \min \{ \alpha_{0} - 2b\lambda^{\frac{1}{2}} - b \lambda^{\frac{1}{2}} - \dfrac{b \alpha_{1} \lambda^{\frac{1}{2}} }{\eta} , b\lambda^{\frac{1}{2}} \} > 0$, we have
\[
\frac{d}{dt} \mathcal{W}\left( \left[\begin{array}{c}
  u\\
  u_t \end{array}\right] \right) \leqslant  - \omega \left\|\left[\begin{array}{c}
  u\\
  u_t \end{array}\right]  \right\|_{X^{0}}^2+ 2b \lambda^{\frac{1}{2}}   C_{\nu}.
\]

Now we observe that if $\xi \in H^2(\Omega) \hookrightarrow L^{\frac{2n}{n-4}}(\Omega) $, then $|\xi|^{\rho+1} \in L^{\frac{2n}{(n-4)(\rho+1)}}(\Omega) \hookrightarrow L^1(\Omega)$ for all $1 < \rho < \dfrac{n+4}{n-4}$, and our hypothesis on $f$ implies that $|f(s)| \leqslant  c(1 + |s|^{\rho}),\, s\in \mathbb{R}$.

Therefore we can find a constant $\bar{c}>1$ such that for all $\xi \in E^\frac{1}{2}$
\[
-\int_{\Omega}\int_{0}^{\xi(x)} f(s)dsdx \leqslant \bar{c} \| \xi\|_{\frac{1}{2}}^{2} (1+ \| \xi\|_{\frac{1}{2}}^{\rho -1}),
\]
and therefore
\begin{equation}\label{sem-ideia}
-d\int_{\Omega}\int_{0}^{\xi(x)}f(s) ds dx \leqslant \| \xi\|_{\frac{1}{2}}^{2},
\end{equation}
whenever $\| \xi\|_{\frac{1}{2}} \leqslant r$ and considering $\displaystyle d= \frac{1}{\bar{c}(1+ r^{\rho-1})}<1$.

Hence from \eqref{sem-ideia} we derive that
\[
\begin{split}
- \frac{\omega}{2}  \left\|\left[\begin{array}{c}
  u\\
  u_t \end{array}\right]  \right\|_{X^{0}}^2 & = -\frac{\omega}{2} \|u\|_{\frac{1}{2}}^{2} -\frac{\omega}{2}\|u_t\|_{L^{2}(\Omega)}^{2} \leqslant  -\frac{\omega}{2} \|u\|_{\frac{1}{2}}^{2}\
 \leqslant \frac{\omega d}{2}\int_{\Omega} \int_{0}^{u} f(s)ds dx \\
\end{split}
\]
and we obtain
\[
\begin{split}
\frac{d}{dt} \mathcal{W}\left( \left[\begin{array}{c}
  u\\
  u_t \end{array}\right] \right)  &\leqslant  - \frac{\omega}{2}  \left\| \left[\begin{array}{c}
  u\\
  u_t \end{array}\right] \right\|_{X^{0}}^2 + \frac{d\omega}{2}\int_{\Omega}\int_{0}^{u}f(s)ds dx + 2b \lambda^{\frac{1}{2}} {{C}}_{\nu} \\
&\leqslant - \frac{\omega}{2}\left[ 4\, W\left(\left[\begin{array}{c}
  u\\
  u_t \end{array}\right]  \right) + d \int_{\Omega}\int_{0}^{u}f(s)ds dx \right] + 2b \lambda^{\frac{1}{2}} {{C}}_{\nu}\\
& \leqslant - \bar{\omega}\mathcal{W} \left( \left[\begin{array}{c}
  u\\
  u_t \end{array}\right]  \right) +2b \lambda^{\frac{1}{2}} {C}_\nu
\end{split}
\]
where $\bar{\omega} = \min\{{2 \omega},\dfrac{d \omega}{2}\}$.

As in the homogeneous case, we conclude that this solutions are uniformly exponentially do\-mi\-nated for initial data $x_0$ in bounded subsets $B \subset X^{0}$, ie, there exist constants $K=K(B)$ and $K_1>0$ such that
\begin{equation}\label{decaimento-exponencial-naolinear}
\left \|x(t) \right\|_{X^{0}}^{2} \leqslant K e^{-\bar{\omega}(t-t_0)} + K_1,
\end{equation}
for all solution $x:[t_0,t_0 + \tau] \to X^0$ of the equation \eqref{eq:syst-nonl} starting in $x_0 \in B$.

\begin{remark}
Estimate \eqref{decaimento-exponencial-naolinear} and Corollary \ref{corol:exst-loc} allow us to consider for each initial data $x_0 \in X^0$ and each initial time $\tau \in \R$, the global solution $x_\epsilon=x_\epsilon(\cdot,\tau, x_0):[\tau,\infty) \to X^0$ of the equation \eqref{eq:syst-nonl} starting in $x_0$. This arises an evolution process $\{ S_\epsilon(t,\tau) : t\geqslant \tau \}$ in the state space $X^0$ defined by $S_\epsilon(t,\tau)x_0 = x_\epsilon(t,\tau,x_0)$. According to \cite{CN}
\begin{equation}\label{eq:evoper}
S_\epsilon(t,\tau)x_0 = L_\epsilon(t,\tau)x_0 + \int_\tau^t L_\epsilon(t,s)F(S_\epsilon(s,\tau)x_0)\, ds, \quad \forall \, t \geqslant \tau \in \R,
\end{equation}
where $\{L_\epsilon(t,\tau): t\geqslant \tau \in \R \}$ is the linear evolution process associated to the homogeneous problem \eqref{eq:homog-gen}.
\end{remark}


\section{Existence of pullback attractors}

In this section we prove the existence of pullback attractors for the problem \eqref{eq:plate} and  the upper-semi\-con\-tinuity of the family of pullback attractors when the parameter $\epsilon$ goes to $0$. For the sake of completness we will present basic definitions and results of the theory of pullback attractors. For more details the reader is invited to look  \cite{ChV,TLR,CCLR}.

We start remembering the definition of Hausdorff semi-distance between two subsets $A$ and $B$ of a metric space $(X,d)$:
\[
{\rm dist}_H(A,B) = \sup_{a\in A} \inf_{b\in B} d(a,b).
\]

\begin{definition}\label{pull attraction}
Let $\{S(t,\tau):\ t\geqslant \tau\in {\mathbb R}\}$ be an evolution
process in a metric space $X$. Given  $A$ and $B$ subsets of $X$, we
say that $A$ \emph{pullback attracts} $B$ at time $t$ if
$$
\lim_{\tau \rightarrow -\infty} {\rm dist}_H(S(t,\tau)B,A)= 0,
$$
where $S(t,\tau)B:= \{S(t,\tau)x \in X : x \in B\}$.
\end{definition}

\begin{definition}
The pullback orbit of a subset $B \subset X$ relatively to the evolution process $\{S(t,\tau):\ t\geqslant \tau\in {\mathbb R}\}$ in the time $t \in \R$ is defined by $\gamma_p(B,t):= \bigcup_{\tau \leqslant t} S(t,\tau)B $.
\end{definition}

\begin{definition}\label{pull-stro-bounded}
An evolution process $\{S(t,\tau): t \geqslant \tau\}$ in $X$ is pullback strongly bounded if, for each $t\in \mathbb{R}$ and each bounded subset $B$ of $X$, $\bigcup_{\tau\leqslant t} \gamma_p(B,\tau)$ is bounded.
\end{definition}


\begin{definition}\label{asym-comp}
An evolution process $\{S(t,\tau):\ t\geqslant \tau\in {\mathbb
R}\}$ in $X$ is pullback asymptotically compact if, for each $t \in \R$, each sequence $\{\tau_n\}$ in $(-\infty, t]$ with $\tau_n \stackrel{n\to\infty}{\longrightarrow}-\infty$ and each bounded sequence $\{x_n\}$ in $X$ such that $\{S(t,\tau_n)x_n\} \subset X $ is bounded, the sequence $\{S(t,\tau_n)x_n\}$ is relatively compact in $X$.
\end{definition}

\begin{definition}\label{pull-absor}
We say that a family of bounded subsets $\{B(t): t\in \R\}$ of
$X$ is pullback absorbing for the evolution process
$\{S(t,\tau):t\geqslant \tau\in \R\}$, if for each $t\in \R$ and for
any bounded subset $B$ of $X$, there exists $\tau_0(t,B)\leqslant t$ such
that
$$S(t,\tau)B\subset B(t) \quad \mbox{ for all  }  \; \tau \leqslant \tau_0(t,B).$$
\end{definition}

\begin{definition}
We say that a family of subsets $\{\mathbb{A}(t):t\in \R\}$ of $X$ is
invariant relatively to the evolution process
$\{S(t,\tau):t\geqslant \tau\in \R\}$ if $S(t,\tau) \mathbb{A}(\tau) = \mathbb{A}(t)$,
for any $t\geqslant \tau$.
\end{definition}

\begin{definition}\label{pull-attractor}
A family of subsets $\{{\mathbb A}(t):t\in \R\}$ of $X$ is
called a \emph{pullback attractor} for the evolution process $\{S(t,\tau):\ t\geqslant
\tau\in {\mathbb R}\}$ if it is invariant, $\mathbb{A}(t)$ is
compact for all $t\in \R$, and pullback attracts bounded subsets of
$X$ at time $t$, for each $t\in \R$.
\end{definition}

In the applications, to prove that a process has a pullback attractor we use the Theorem \ref{theorem pullback}, proved in \cite{CCLR}, which gives a sufficient condition for existence of a compact pullback attractor. For this, we will need the concept of pullback strongly bounded dissipativeness.

\begin{definition}
An evolution process $\{S(t,\tau):t\geqslant \tau \in \mathbb{R}\}$ in $X$ is \emph{pullback strongly bounded dissipative} if, for each $t\in \mathbb{R}$, there is a bounded subset $B(t)$ of $X$ which pullback absorbs bounded subsets of $X$ at time $s$ for each $s\leqslant t$; that is, given a bounded subset $B$ of $X$ and $s\leqslant t$, there exists $\tau_0(s,B)$ such that $S(s,\tau)B \subset B(t)$, for all $\tau\leqslant \tau_0(s,B)$.
\end{definition}

Now we can present the result which guarantees the existence of pullback attractors for non\-autonomous problems.

\begin{theorem}[\cite{CCLR}]\label{theorem pullback}
If an evolution process $\{S(t,\tau):\ t\geqslant \tau\in {\mathbb R}\}$ in the metric space $X$ is pullback strongly bounded dissipative and pullback asymptotically compact, then $\{S(t,\tau):\ t\geqslant \tau\in {\mathbb R}\}$ has a pullback attractor $\{\mathbb{A}(t): t\in \mathbb{R}\}$ with the property that $\bigcup_{\tau \leqslant t } \mathbb{A}(\tau)$ is bounded for each $t\in \mathbb{R}$.
\end{theorem}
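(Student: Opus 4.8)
The plan is to realize the attractor fiberwise as a pullback $\omega$-limit of the absorbing family and then to verify the three defining properties together with the boundedness of $\bigcup_{\tau\leqslant t}\mathbb{A}(\tau)$. For a bounded set $B\subset X$ and $t\in\R$ I would introduce the pullback $\omega$-limit set
$$
\omega(B,t)=\bigcap_{\sigma\leqslant t}\overline{\bigcup_{\tau\leqslant\sigma}S(t,\tau)B},
$$
and record the standard characterization: $\xi\in\omega(B,t)$ if and only if there exist $\tau_n\to-\infty$ and $x_n\in B$ with $S(t,\tau_n)x_n\to\xi$. Letting $\{B(t):t\in\R\}$ be the bounded family furnished by pullback strong bounded dissipativeness, the candidate attractor is
$$
\mathbb{A}(t)=\omega(B(t),t).
$$
A preliminary observation, used repeatedly, is that for \emph{any} bounded $B$ one has $\omega(B,t)\subseteq\omega(B(t),t)$: inserting intermediate times $\sigma_n\to-\infty$ with $\tau_n\leqslant\sigma_n\leqslant t$ and factoring $S(t,\tau_n)x_n=S(t,\sigma_n)\big(S(\sigma_n,\tau_n)x_n\big)$, the inner terms lie in $B(t)$ for $n$ large, so every limit has the form defining $\omega(B(t),t)$. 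Thus $\mathbb{A}(t)$ is the maximal $\omega$-limit and is independent of the particular absorbing family chosen.

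First I would establish nonemptiness and compactness. Fixing $t$ and any sequence $S(t,\tau_n)w_n$ with $\tau_n\to-\infty$ and $w_n\in B(t)$, boundedness of $B(t)$ together with strong bounded dissipativeness keeps the relevant orbits in a bounded set, so pullback asymptotic compactness yields a convergent subsequence whose limit lies in $\mathbb{A}(t)$; hence $\mathbb{A}(t)\neq\varnothing$. Sequential compactness of $\mathbb{A}(t)$ follows because every sequence in it is of this form and its limits remain in $\mathbb{A}(t)$. For pullback attraction I would argue by contradiction: if $\mathbb{A}(t)$ failed to attract some bounded $B$, there would be $\delta>0$, $\tau_n\to-\infty$ and $x_n\in B$ with ${\rm dist}_H(\{S(t,\tau_n)x_n\},\mathbb{A}(t))\geqslant\delta$. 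Since $B(t)$ absorbs $B$ at time $t$, the sequence $\{S(t,\tau_n)x_n\}$ eventually lies in $B(t)$ and is bounded, so pullback asymptotic compactness gives $S(t,\tau_{n_k})x_{n_k}\to\xi$; factoring through intermediate times $\sigma_k\to-\infty$ as above exhibits $\xi\in\mathbb{A}(t)$, contradicting the $\delta$-separation.

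The main obstacle is the invariance $S(t,\sigma)\mathbb{A}(\sigma)=\mathbb{A}(t)$ for $t\geqslant\sigma$. The inclusion $S(t,\sigma)\mathbb{A}(\sigma)\subseteq\mathbb{A}(t)$ is the soft one: if $\xi=\lim S(\sigma,\tau_n)y_n\in\mathbb{A}(\sigma)$ with $y_n\in B(\sigma)$, then joint continuity of the process (property 3) gives $S(t,\sigma)\xi=\lim S(t,\tau_n)y_n\in\omega(B(\sigma),t)\subseteq\mathbb{A}(t)$. The reverse inclusion is where pullback asymptotic compactness is indispensable: given $\eta=\lim S(t,\tau_n)w_n\in\mathbb{A}(t)$ with $w_n\in B(t)$, I would factor $S(t,\tau_n)w_n=S(t,\sigma)\big(S(\sigma,\tau_n)w_n\big)$, note that $\{S(\sigma,\tau_n)w_n\}$ is bounded (as $B(t)$ absorbs $B(t)$ at the earlier time $\sigma\leqslant t$), extract via pullback asymptotic compactness a subsequence $S(\sigma,\tau_{n_k})w_{n_k}\to\zeta\in\mathbb{A}(\sigma)$, and conclude $\eta=S(t,\sigma)\zeta$ by continuity. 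Assembling a genuinely convergent subsequence that is simultaneously compatible with the factorization requires a careful diagonal argument, and this is the step I expect to demand the most care.

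Finally I would record the boundedness of $\bigcup_{\tau\leqslant t}\mathbb{A}(\tau)$: for every $\tau\leqslant t$ the set $B(t)$ absorbs $B(\tau)$ at time $\tau$, whence $\mathbb{A}(\tau)=\omega(B(\tau),\tau)\subseteq\overline{B(t)}$, so the union is contained in $\overline{B(t)}$ and is bounded. This is precisely the point at which the \emph{strong} form of dissipativeness (a single bounded set absorbing uniformly at all times $s\leqslant t$) is needed, rather than mere bounded dissipativeness. Uniqueness of a family with the stated invariance, compactness and attraction properties then follows by the usual argument: any two such families mutually pullback attract one another and, being invariant and compact, must coincide.
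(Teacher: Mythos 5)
The paper offers no proof of this theorem: it is imported verbatim from the cited reference \cite{CCLR}, so there is nothing internal to compare against. Your construction of $\mathbb{A}(t)$ as the pullback $\omega$-limit $\omega(B(t),t)$ of the strongly absorbing family, with nonemptiness/compactness and attraction extracted from pullback asymptotic compactness, invariance obtained by factoring through intermediate times, and the boundedness of $\bigcup_{\tau\leqslant t}\mathbb{A}(\tau)$ read off from the fact that the single set $B(t)$ absorbs at every time $s\leqslant t$, is precisely the standard argument of that reference and is correct; the only point worth flagging is that the final identification $\eta=S(t,\sigma)\zeta$ uses continuity of $S(t,\sigma)$ in the initial datum, which is not among the three axioms the paper lists for an evolution process but does hold for the processes considered here (Theorem \ref{teo:existunicsol}), and no genuine diagonal argument is needed there since the full sequence $S(t,\tau_n)w_n$ already converges.
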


Next result gives sufficient conditions for pullback asymptotic compactness, and its proof can be found in \cite{CCLR}.

\begin{theorem}[\cite{CCLR}]\label{pull-asym-comp}
Let $\{S(t,s): t\geqslant s\}$ be a pullback strongly bounded evolution process such that $S(t,s) = T(t,s) + U(t,s)$, where $U(t,s)$ is compact and there exist a non-increasing function $k: \mathbb{R}^{+} \times \mathbb{R}^{+} \to \mathbb{R}$, with $k(\sigma,r)\to 0$ when $\sigma \to \infty$, and for all $s\leqslant t$ and $x\in X$ with $\|x\| \leqslant r$, $\|T(t,s)x\| \leqslant k(t-s,r)$. Then, the family of evolution process $\{S(t,s): t\geqslant s\}$ is pullback asymptotically compact.
\end{theorem}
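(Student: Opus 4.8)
The plan is to verify the definition of pullback asymptotic compactness (Definition \ref{asym-comp}) directly. Fix $t \in \R$, a sequence $\tau_n \to -\infty$ in $(-\infty,t]$, and a bounded sequence $\{x_n\}$ with $\|x_n\| \leqslant r$ for which $\{S(t,\tau_n)x_n\}$ is bounded; I must show this last sequence is relatively compact. Writing $S = T + U$, the first step is the easy one: since $\|T(t,\tau_n)x_n\| \leqslant k(t-\tau_n,r)$ and $t-\tau_n \to +\infty$, the monotonicity of $k$ in its first argument forces $T(t,\tau_n)x_n \to 0$. It would then suffice to control $U(t,\tau_n)x_n = S(t,\tau_n)x_n - T(t,\tau_n)x_n$, but one cannot conclude directly that this is relatively compact: compactness of each operator $U(t,\tau_n)$ for a \emph{fixed} pair of times says nothing about the family as $\tau_n$ varies.

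The device that circumvents this is the cocycle property of the process together with pullback strong boundedness. For each integer $m \geqslant 1$ set $s_m := t-m$. Pullback strong boundedness gives a radius $r' > 0$, independent of $m$, such that $\bigcup_{\tau \leqslant t}\gamma_p(\{x_n\},\tau)$ has norm at most $r'$; in particular $y_n^{(m)} := S(s_m,\tau_n)x_n$ satisfies $\|y_n^{(m)}\| \leqslant r'$ for every $m$ and every $n$ with $\tau_n \leqslant s_m$ (which holds for all large $n$, since $\tau_n \to -\infty$). Using $S(t,\tau_n) = S(t,s_m)S(s_m,\tau_n)$ and decomposing $S(t,s_m)$, I then write, for such $n$,
\[
S(t,\tau_n)x_n = T(t,s_m)y_n^{(m)} + U(t,s_m)y_n^{(m)}.
\]
Here $\{U(t,s_m)y_n^{(m)}\}_n$ is relatively compact for each fixed $m$ (the operator $U(t,s_m)$ is compact and $\{y_n^{(m)}\}_n$ is bounded), while the remainder is uniformly small: $\|T(t,s_m)y_n^{(m)}\| \leqslant k(t-s_m,r') = k(m,r')$, which tends to $0$ as $m \to \infty$.

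To finish I would argue by total boundedness. Given $\varepsilon > 0$, choose $m$ with $k(m,r') < \varepsilon/2$ and cover the relatively compact set $\{U(t,s_m)y_n^{(m)}\}_n$ by finitely many $(\varepsilon/2)$-balls. Every $S(t,\tau_n)x_n$ (for large $n$) lies within $\varepsilon/2$ of its corresponding $U(t,s_m)y_n^{(m)}$, hence within $\varepsilon$ of one of the finitely many centres; adjoining the finitely many exceptional early terms produces a finite $\varepsilon$-net. As $X$ is complete, the sequence $\{S(t,\tau_n)x_n\}$ is totally bounded and therefore relatively compact, which is exactly pullback asymptotic compactness. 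Equivalently, the last step can be phrased through the Kuratowski measure of noncompactness, observing that the set sits within $k(m,r')$ of a compact set for every $m$, so its measure vanishes. The main obstacle, and the only genuinely non-formal point, is recognizing that the naive splitting at the endpoints $(t,\tau_n)$ fails and that one must instead \emph{insert an intermediate time} $s_m$ receding to $-\infty$ with $m$; the uniform bound $r'$ furnished by pullback strong boundedness is precisely what makes the decay estimate $k(m,r') \to 0$ usable simultaneously for all $n$.
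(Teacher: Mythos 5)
Your argument is correct, and it is essentially the standard proof of this result (the paper itself does not reprint a proof but simply cites \cite{CCLR}, where the argument runs along exactly these lines: insert an intermediate time, use pullback strong boundedness to get a uniform radius $r'$, split $S(t,s_m)$ into a part decaying like $k(m,r')$ plus a compact part, and conclude by total boundedness or, equivalently, by showing the Kuratowski measure of noncompactness vanishes). You correctly identified the one genuinely non-trivial point --- that the naive decomposition at $(t,\tau_n)$ fails and the cocycle property must be used to re-anchor the decomposition at a fixed intermediate time --- so there is nothing to add.
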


\begin{theorem}\label{teo-compact}
Considering in $X^0$, the family of operators
\[
\displaystyle U_\epsilon(t,\tau)(\cdot):= \int_\tau^t L_\epsilon(t,s)F(S_\epsilon(s,\tau) \cdot)\, ds,
\]
obtained from \eqref{eq:evoper}, the family of evolution process $\{U_\epsilon(t, \tau) :  t \geqslant \tau\}$ is compact in $X^{0}$.
\end{theorem}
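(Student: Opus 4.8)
The plan is to exploit the parabolic smoothing of the linear evolution process against the relatively low regularity demanded by $F$, and then convert a gain on the fractional power scale into compactness via a compact embedding. The key point is that although $F$ takes values only in $X^{-\alpha}$, the propagator $L_\epsilon(t,s)$ regularizes, gaining almost a full unit on the scale $\{X^\gamma\}$ with a singularity in $t-s$ that is still integrable.

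First I would fix $t \geqslant \tau$ and a bounded set $B \subset X^0$, and recall from \eqref{decaimento-exponencial-naolinear} together with Corollary \ref{corol:exst-loc} that $S_\epsilon(\cdot,\tau)$ carries $B$ into a bounded subset of $X^0$, uniformly for $s \in [\tau,t]$. Since $F:X^0 \to X^{-\alpha}$ is Lipschitz on bounded sets (Corollary \ref{corol:lips}), the integrand $F(S_\epsilon(s,\tau)x_0)$ then ranges over a bounded subset of $X^{-\alpha}$ as $x_0$ runs over $B$ and $s$ over $[\tau,t]$; denote such a bound by $\Lambda=\Lambda(t,\tau,B)$. Next I would invoke the smoothing estimates for $\{L_\epsilon(t,s)\}$ constructed in \cite{CN} from the uniformly sectorial, uniformly H\"older continuous family $\mathcal{A}_\epsilon(t)$: for $0 \leqslant \gamma + \alpha < 1$ there is a constant $C$ with
$$
\|L_\epsilon(t,s)\|_{\mathcal{L}(X^{-\alpha},\, X^\gamma)} \leqslant C\, (t-s)^{-(\gamma+\alpha)}, \qquad t > s.
$$
Choosing $\gamma \in (0,1-\alpha)$, which is possible because $\alpha < 1$, the exponent $\gamma+\alpha$ lies in $(0,1)$, so that $\int_\tau^t (t-s)^{-(\gamma+\alpha)}\,ds$ is finite; estimating under the integral sign yields
$$
\|U_\epsilon(t,\tau)x_0\|_{X^\gamma} \leqslant C\Lambda \int_\tau^t (t-s)^{-(\gamma+\alpha)}\,ds = \frac{C\Lambda}{1-\gamma-\alpha}\,(t-\tau)^{1-\gamma-\alpha},
$$
uniformly in $x_0 \in B$. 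Hence $U_\epsilon(t,\tau)(B)$ is a bounded subset of $X^\gamma$.

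Finally, since $A$ (and hence $\mathcal{A}$) has compact resolvent, the embedding $X^\gamma \hookrightarrow X^0$ is compact for every $\gamma > 0$ (indeed $X^\gamma = E^{\frac{\gamma+1}{2}} \times E^{\frac{\gamma}{2}}$ embeds compactly into $X^0 = E^{\frac12}\times E^0$). Therefore the $X^\gamma$-bounded set $U_\epsilon(t,\tau)(B)$ is relatively compact in $X^0$, which is precisely the statement that $U_\epsilon(t,\tau)$ is a compact map on $X^0$; as $B$ and $(t,\tau)$ were arbitrary, the whole family $\{U_\epsilon(t,\tau): t\geqslant \tau\}$ is compact.

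I expect the main obstacle to be the second paragraph, namely securing the smoothing bound $\|L_\epsilon(t,s)\|_{\mathcal{L}(X^{-\alpha},X^\gamma)} \leqslant C(t-s)^{-(\gamma+\alpha)}$ and checking that the variation-of-constants integral actually converges in $X^\gamma$, not merely in $X^0$. In the autonomous case this is classical analytic-semigroup theory, but in the singularly nonautonomous setting it must be extracted from the construction and estimates of the evolution process in \cite{CN}, with constants uniform in $\epsilon\in[0,1]$. Once the integrable singularity is in hand, the compact embedding delivers the conclusion with no further work.
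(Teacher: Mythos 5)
Your argument is correct, but it takes a genuinely different route from the one in the paper. The paper's proof is a one-liner that places the compactness in the nonlinearity itself: it observes that $f^e$ maps $E^{\frac12}$ Lipschitz-continuously into $E^{-\frac{\alpha}{2}}$ and that the embedding $E^{-\frac{\alpha}{2}} \hookrightarrow E^{-\frac12}$ is compact because $\alpha<1$, so that $F$, viewed as a map into the lower space $X^{-1}$, is a compact map; the compactness of $U_\epsilon(t,\tau)$ is then asserted to follow. You instead leave $F$ where it is, as a bounded map into $X^{-\alpha}$, and put the work into the linear process: the smoothing estimate $\|L_\epsilon(t,s)\|_{\mathcal{L}(X^{-\alpha},X^{\gamma})} \leqslant C (t-s)^{-(\gamma+\alpha)}$ with $0<\gamma<1-\alpha$ (which is exactly the kind of estimate established in \cite{CN} for uniformly sectorial, uniformly H\"older continuous families, with constants uniform in $\epsilon$) shows that $U_\epsilon(t,\tau)$ maps bounded sets of $X^0$ into bounded sets of $X^{\gamma}$, and the compact embedding $X^{\gamma}\hookrightarrow X^0$ finishes the argument. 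Your version costs the extra step of justifying the smoothing bound and the convergence of the integral in $X^{\gamma}$ — which you correctly identify as the only delicate point — but it buys a fully explicit argument (the paper's proof still implicitly needs to pass compactness of $F$ through the time integral, e.g.\ by an approximation $\int_\tau^{t-h}$ or an equicontinuity argument, which is not spelled out) and, as a bonus, an actual gain of regularity: $U_\epsilon(t,\tau)B$ is bounded in $X^{\gamma}$, not merely precompact in $X^0$, which is useful later if one wants regularity of the pullback attractors.
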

\begin{proof}
The compactness of $U_\epsilon$ follows easily from the fact that
  $$
  E^\frac{1}{2} \stackrel{f^e}{\longrightarrow} X^{-\frac{\alpha}{2}} \hookrightarrow E^{-\frac{1}{2}},
  $$
being the last inclusion compact, since that $\alpha <1$.
\end{proof}

From estimate \eqref{decaimento-exponencial-naolinear} it is easy to check that the evolution process $\{S(t, \tau): t\geqslant \tau\}$ associated to  the equation \eqref{eq:syst-nonl} is pullback strongly bounded.
Hence, applying Theorem \ref{pull-asym-comp}, we obtain that the family of evolution process $\{S_\epsilon(t, \tau): t\geqslant \tau\}$ is pullback asymptotically compact. Now, applying Theorem \ref{theorem pullback} we get that equation \eqref{eq:plate} has a pullback attractor $\{\mathbb{A}_\epsilon(s): s\in \R\}$ in $X^{0}= H^2(\Omega) \cap H^1_0(\Omega) \times L^2(\Omega)$ and that $\bigcup_{s\in \R} \mathbb{A}_\epsilon(s) \subset X^0$ is bounded.


\subsection{Upper-semicontinuity of pullback attractors}

For each value of the parameter $\epsilon \in [0,1]$ we recall that $S_\epsilon(t,\tau)$ is the evolution process associated to semilinear problem \eqref{eq:syst-nonl}. Now we prove that the family of pullback attractors $\{\mathbb{A}_\epsilon(t)\}$ is upper-semicontinuous in $\epsilon=0$, ie, we show that
$$
\lim_{\epsilon \to 0} {\rm dist}(\mathbb{A}_\epsilon(t), \mathbb{A}_0(t))=0.
$$

Let be $Z\left( \left[\begin{array}{c}
  u\\
  v \end{array}\right] \right)  = \dfrac{1}{2} \left( \|u\|^2_{\frac{1}{2}} + \|v\|^2_{L^2(\Omega)} \right)$. For each $x_0 \in X^0$ consider $u=S_\epsilon(t,\tau)u_0$ and $v=S_0(t,\tau)u_0$. Let $w=u-v$. Then
\begin{equation}
w_{tt} =  a_0(t,x)v_t - a_\epsilon(t,x)u_t + \Delta w_t - \Delta^2 w - \lambda w + f(u) - f(v)
\end{equation}

It follows from Remark \ref{Lips-function} that $f$ is Lipschitz continuous in bounded set from $E^\frac{1}{2}$ to $L^2(\Omega)$. Since $u, v, u_t$ and $v_t$ are bounded, Young's Inequality leads to
\[
\begin{split}
\dfrac{d}{dt} &Z\left( \left[\begin{array}{c}
  w\\
  w_t \end{array}\right] \right) \\  & =  \langle w, w_t \rangle_{E^\frac{1}{2}} +  \langle w_t, w_{tt} \rangle_{L^2(\Omega)} \\
 & =   \langle \Delta w, \Delta w_{t} \rangle_{L^2(\Omega)} + \lambda \langle w, w_{t} \rangle_{L^2(\Omega)} +  \langle w_t, w_{tt} \rangle_{L^2(\Omega)} \\
 & =   \langle \Delta^2 w + \lambda w + w_{tt}, w_t \rangle_{L^2(\Omega)}  \\
 & =  \langle a_0(t,x)v_t - a_\epsilon(t,x)u_t  + \Delta w_t + f(u)-f(v), w_t \rangle_{L^2(\Omega)} \\
  & =   \langle - a_0(t,x)w_t + (a_0(t,x)-a_\epsilon(t,x))u_t, w_t \rangle_{L^2(\Omega)} - \|\nabla w_t \|^2_{L^2(\Omega)}  + \langle f(u)-f(v), w_t \rangle_{L^2(\Omega)} \\
 & \leqslant -\alpha_0 \| w_t \|^2_{L^2(\Omega)} + \|a_0 - a_\epsilon \|_{L^\infty(\R \times \Omega)} \|u_t \|_{L^2(\Omega)}  \| w_t \|_{L^2(\Omega)} + K(\|w\|^2_{L^2(\Omega)} + \|w_t\|^2_{L^2(\Omega)})\\
& \leqslant \tilde{K} Z\left( \left[\begin{array}{c}
  w\\
  w_t \end{array}\right] \right) + \tilde{K}  \|a_0 - a_\epsilon \|_{L^\infty(\R \times \Omega)}.
\end{split}
\]

Therefore,
\[
\begin{split}
 Z\left( \left[\begin{array}{c}
  w(t)\\
  w_t (t) \end{array}\right] \right) & \leqslant \tilde{K} \int_\tau^t  Z\left( \left[\begin{array}{c}
  w(s)\\
  w_s (s) \end{array}\right] \right) ds + \tilde{K}(t-\tau)\|a_0 - a_\epsilon \|_{L^\infty(\R \times \Omega)} + Z\left( \left[\begin{array}{c}
  w(\tau)\\
  w_t (\tau) \end{array}\right] \right) \\
  & \leqslant \tilde{\tilde{K}}  \int_\tau^t  Z\left( \left[\begin{array}{c}
  w(s)\\
  w_s (s) \end{array}\right] \right) ds + \tilde{\tilde{K}}  (t-\tau)\|a_0 - a_\epsilon \|_{L^\infty(\R \times \Omega)}
  \end{split}
\]
where $\tilde{\tilde{K}} = \max\left\{\tilde{{K}} , \dfrac{Z\left( \left[\begin{array}{c}
  w(\tau)\\
  w_t (\tau) \end{array}\right] \right)}{(\alpha_1 - \alpha_0)} \right\}.$

Hence, by Gronwall's Inequality it follows that
\begin{equation}\label{eq:grownep}
\|w\|^2_{\frac{1}{2}} + \|w_t\|^2_{L^2(\Omega)}  \leqslant \tilde{\tilde{\tilde{K}}} \|a_0 - a_\epsilon \|_{L^\infty(\R \times \Omega)}  \int_\tau^t e^{K(t-s)} \, ds \to 0,
\end{equation}
as $\epsilon \to 0$ in compact subsets of $\R$ uniformly for $x_0$ in bounded subsets of $X^0$.

For $\delta >0$ given, let $\tau \in \R$ be such that ${\rm dist}(S_0(t,\tau)B,\mathcal{A}_0(t)) < \frac{\delta}{2}$, where $\displaystyle B \supset \bigcup_{s \in \R}\mathcal{A}_\epsilon(s)$ is a bounded set (whose existence is guaranteed by Theorem \ref{theorem pullback}).

Now for \eqref{eq:grownep}, there exists $\epsilon_0 >0$ such that
\[
\displaystyle \sup_{a_\epsilon \in \mathcal{A}_\epsilon(t)} \|S_\epsilon(t,\tau)a_\epsilon - S_0(t,\tau)a_\epsilon \| <  \frac{\delta}{2},
\]
for all $\epsilon < \epsilon_0$. Then,
\[
\begin{split}
{\rm dist}(\mathcal{A}_\epsilon(t),\mathcal{A}_0(t))  & \leqslant {\rm dist}(S_\epsilon(t,\tau) \mathcal{A}_\epsilon(\tau),S_0(t,\tau)\mathcal{A}_\epsilon(\tau)) +{\rm dist}(S_0(t,\tau) \mathcal{A}_\epsilon(\tau),S_0(t,\tau)\mathcal{A}_0(\tau)) \\
&  = \sup_{a_\epsilon \in \mathcal{A}_{\epsilon}(\tau)} {\rm dist}(S_\epsilon(t,\tau)a_\epsilon,  S_0(t,\tau)a_\epsilon) + {\rm dist}(S_0(t,\tau) \mathcal{A}_\epsilon(t),\mathcal{A}_0(t)) <  \frac{\delta}{2} +  \frac{\delta}{2},
  \end{split}
\]
which proves the upper-semicontinuity of the family of attractors.

\end{document}